\def\bt{\begin{thm}}
\def\et{\end{thm}}
\def\bl{\begin{lem}}
\def\el{\end{lem}}
\def\bd{\begin{defi}}
\def\ed{\end{defi}}
\def\bc{\begin{cor}}
\def\ec{\end{cor}}
\def\bp{\begin{proof}}
\def\ep{\end{proof}}
\def\br{\begin{rem}}
\def\er{\end{rem}}
\newtheorem{thm}{Theorem}[section]
\newtheorem{prop}[thm]{Proposition}
\newtheorem{lem}[thm]{Lemma}
\newtheorem{defn}[thm]{Definition}
\newtheorem{rem}[thm]{Remark}
\newtheorem{cor}[thm]{Corollary}
\numberwithin{equation}{section}
\newcommand{\C}{\Bbb{C}}
\newcommand{\R}{\Bbb{R}}
\newcommand{\E}{\Bbb{E}}
\newcommand{\Er}{\Bbb{E}N_n}
\newcommand{\bthm}{\begin{thm}}
\newcommand{\ethm}{\end{thm}}
\newcommand{\bstp}{\begin{stp}}
\newcommand{\estp}{\end{stp}}
\newcommand{\blemma}{\begin{lemma}}
\newcommand{\elemma}{\end{lemma}}
\newcommand{\bprop}{\begin{prop}}
\newcommand{\eprop}{\end{prop}}
\newcommand{\bpf}{\begin{pf}}
\newcommand{\epf}{\end{pf}}
\newcommand{\bdefn}{\begin{defn}}
\newcommand{\edefn}{\end{defn}}
\newcommand{\brk}{\begin{rmrk}}
\newcommand{\erk}{\end{rmrk}}
\newcommand{\bcrl}{\begin{crl}}
\newcommand{\ecrl}{\end{crl}}
\title[Expected number of random real roots]{Expected number of real roots for random linear combinations of  orthogonal polynomials associated with radial weights}
\author{Turgay Bayraktar}
\keywords{Expected number of real zeros, random orthogonal polynomials}
\subjclass[2000]{30C15, 60G99}
\begin{document}

\begin{abstract}
In this note, we obtain asymptotic expected number of real zeros for random polynomials of the form
$$f_n(z)=\sum_{j=0}^na^n_jc^n_jz^j$$ where $a^n_j$ are independent and identically distributed real random variables with bounded $(2+\delta)$th absolute moment and the deterministic numbers $c^n_j$ are normalizing constants for the monomials $z^j$ within a weighted $L^2$-space induced by a radial weight function satisfying suitable smoothness and growth conditions. 
\end{abstract}

\maketitle
%\tableofcontents
\section{Introduction}
In this note we consider random univariate polynomials of the form
\begin{equation}\label{rp}
f^{\zeta}_n(z)=\sum_{j=0}^na^n_jc^n_jz^j
\end{equation} where $c^n_j$ are deterministic constants and $a^n_j$ are independent copies of a real valued non-degenerate random variable $\zeta$ of mean zero and variance one. We denote the number of real zeros of $f^{\zeta}_n$ by $N_n^{\zeta}.$ Therefore $N_n^{\zeta}:\mathcal{P}_n\to \{0,\dots,n\}$ defines a random variable on the set of polynomials $\mathcal{P}_n$ of degree at most $n$. We write $f_n$ and $N_n$ for short when $\zeta=N_{\R}(0,1)$ is the real Gaussian. 

The study of the number of real roots for Kac polynomials (i.e. $c^n_j=1$ for all $j$ and $n$) goes back to Bloch and P\'olya \cite{BPo} where they considered the case the random variable $\zeta$ is the uniform distribution on the set $\{-1,0,1\}$. This problem has been considered by Littlewood \& Offord in a series of papers \cite{LO, LO1,LO2} for real Gaussian, Bernoulli and uniform distributions. In \cite{Kac} Kac established a remarkable formula for the expected number of real zeros of Gaussian random polynomials 
\begin{equation}\label{kac1}
\E N_n=\frac{4}{\pi}\int_0^1\frac{\sqrt{A(x)C(x)-B^2(x)}}{A(x)}dx
\end{equation}
where $$A(x)=\sum_{j=0}^nx^{2j},\ B(x)=\sum_{j=1}^njx^{2j-1},\ C(x)=\sum_{j=1}^nj^2x^{2j-2}$$
and (\ref{kac1}) in turn implies that
\begin{equation}\label{kac2}
\E N_n=\big(\frac{2}{\pi}+o(1)\big)\log n.
\end{equation}
Later, Erd\"{o}s and Turan \cite{ET} obtained more precise estimates. The result stated in (\ref{kac2}) was also generalized to Bernoulli distributions by Erd\"{o}s \& Offord \cite{EO} as well as to distributions in the domain of attraction of the normal law by Ibragimov \& Maslova \cite{IM1,IM2}.

On the other hand, for models other than Kac ensembles, the behavior of $N_n$ changes considerably. In \cite{EdK}  Edelman \& Kostlan gave a beautiful geometric argument in order to calculate expected number of real roots of random polynomials. The argument in \cite{EdK} applies in a quite general setting of random sums of the form 
$$f_n(z)=\sum_{j=0}^na_jP^n_j(z)$$ where $P^n_j$ are entire functions that take real values on the real line (see also \cite{vanderbei, LPX}  for recent treatment of this problem). In particular, Edelman and Kostlan \cite[\S 3]{EdK} proved that 
 \[\E N_n=\begin{cases}  \sqrt{n} & \text{for Elliptic polynomials (i.e. $c^n_j=\sqrt{{n \choose j}}$)} \\ 
\big(\frac{2}{\pi}+o(1)\big)\sqrt{n}  & \text{for Weyl polynomials (i.e. $c^n_j=\sqrt{\frac{1}{j!}}$)}. \end{cases} \]

More recently, Tao and Vu \cite{TaoVu2} established some local universality results concerning the correlation
functions of the zeroes of random polynomials of the form (\ref{rp}). In particular, the results of \cite{TaoVu2} generalized aforementioned ones for real Gaussians to the setting where $\zeta$ is a random variable satisfying the moment condition $\E|\zeta|^{2+\delta}<\infty$ for some $\delta>0.$  Moreover, Tao and Vu also obtained some variance estimate on $N_n$ which leads to a weak law of large numbers for real zeros.

We remark that all the three models above (Kac, elliptic and Weyl polynomials) arise in the context of orthogonal polynomials. In this direction, the expected distribution of real zeros for random linear combination of Legendre polynomials is studied by Das \cite{Das71} in which he proved that 
$$\E N_n=\frac{n}{\sqrt{3}}+o(n).$$ Recently, Lubinsky, Pritsker \& Xie \cite{LPX, PX, LPX1} generalized this result to the random orthogonal polynomials with suitable weights supported on the real line.

In this note we study real roots of random polynomials which are random linear combinations of orthogonal polynomials induced by a non-negative radially symmetric (i.e. $\varphi(z)=\varphi(|z|)$) $\mathscr{C}^2$-weight function satisfying the growth condition
\begin{equation}\label{growth}
\varphi(z)\geq (1+\epsilon)\log |z|\ \text{for}\ |z|\gg1
\end{equation}
for some fixed $\epsilon>0.$ In the present setting, one can define associated equilibrium potential $U_{\varphi}$ and weighted equilibrium measure by $\mu_e:=\frac{1}{2\pi}\Delta U_{\varphi}$ where the latter is supported on the compact set  $D_{\varphi}:=\{z\in \C: U_{\varphi}(z)=\varphi(z)\}$ (see \S2 for details). The interior $\mathcal{B}_{\varphi}:=\{z\in D_{\varphi}: \Delta\varphi(z)>0\}$ is often called as \textit{Bulk} in the literature.
 
We define a weighted $L^2$-norm on the space $\mathcal{P}_n$ of polynomials of degree at most $n$ by 
\begin{equation}\label{n}
\|f_n\|^2_{\varphi}:=\int_{\C}|f_n(z)|^2e^{-2n\varphi(z)}dz
\end{equation} where $dz$ denotes the Lebesgue measure on $\C$. In particular, the monomials $\{z^j\}_{j=0}^n$ form an orthogonal basis for $\mathcal{P}_n.$ We consider random polynomials of the form (\ref{rp})
where $a^n_j$ are independent copies of a real random variable $\zeta$ and 
$$c^n_j:=(\int_{\C}|z|^{2j}e^{-2n\varphi(z)}dz)^{-\frac12}.$$ 

Our main result (Theorem \ref{main}) establishes the asymptotics of expected number $\E N_n^{\zeta}$ of real roots for random polynomials with independent identically distributed (iid) random coefficients satisfying mild moment conditions: 

\begin{thm}\label{main}
Let $\varphi:\C\to \R$ be a non-negative radially symmetric weight function of class $\mathscr{C}^2$ satisfying (\ref{growth}) and $\zeta$ be a non-degenerate real random variable of mean zero and variance one satisfying $\E|\zeta|^{2+\delta}<\infty$ for some $\delta>0.$ Then the expected number of real zeros for random polynomials $f_{n}^{\zeta}(z)$ satisfies  
\begin{equation}\label{azeros}
\lim_{n\to \infty}\frac{1}{\sqrt{n}}\Er^{\zeta}=\frac{1}{\pi}\int_{\mathcal{B}_{\varphi}\cap \R}\sqrt{\frac12\Delta\varphi(x)}dx
\end{equation}
\end{thm}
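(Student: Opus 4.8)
The plan is to reduce the problem to an application of the Kac–Rice formula together with an asymptotic analysis of the relevant correlation kernel, and then to upgrade the Gaussian computation to general $\zeta$ by a universality argument in the spirit of Tao and Vu \cite{TaoVu2}. I would begin with the Gaussian case $\zeta = N_{\R}(0,1)$. Here the Edelman–Kostlan formula \cite{EdK} gives
\begin{equation}\label{ek}
\E N_n = \frac{1}{\pi}\int_{\R}\sqrt{\partial_x\partial_y\log K_n(x,y)\big|_{y=x}}\,dx,
\end{equation}
where $K_n(x,y) = \sum_{j=0}^n (c^n_j)^2 x^j y^j$ is the (un-normalized) reproducing kernel for $\mathcal P_n$ with respect to $\|\cdot\|_\varphi$. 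The integrand is the square root of the variance of the logarithmic derivative of the Gaussian field $f_n/\sqrt{K_n(x,x)}$, i.e. of the normalized random polynomial.

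The analytic core is then to determine the pointwise limit, after rescaling by $n$, of the density
\begin{equation}\label{rho}
\rho_n(x) := \frac{1}{\pi}\sqrt{\partial_x\partial_y\log K_n(x,y)\big|_{y=x}}
\end{equation}
on the real line. The idea is that $e^{-2n\varphi}$-weighted orthogonality makes $K_n$ a Bergman-type kernel for the exponentially weighted space, and the standard Bergman kernel asymptotics for such weights (the "bulk" expansion) give $\frac{1}{2n}\log K_n(x,x) \to U_\varphi(x)$ locally uniformly, with the appropriate two-jet behavior controlled by $\Delta\varphi$. Concretely, on the bulk $\mathcal B_\varphi\cap\R$ one expects $\frac{1}{n}\,\partial_x\partial_y\log K_n\big|_{y=x} \to \partial_x\partial_y U_\varphi$ evaluated on the diagonal, which for the weighted equilibrium potential equals $\tfrac{1}{2}\Delta\varphi(x)$ (since $\Delta U_\varphi = \Delta\varphi$ on the bulk and, $U_\varphi$ being the potential there, its mixed second derivative on the diagonal is half the Laplacian). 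This yields $\frac{1}{\sqrt n}\rho_n(x) \to \frac{1}{\pi}\sqrt{\tfrac12\Delta\varphi(x)}$ pointwise on $\mathcal B_\varphi\cap\R$, while outside $D_\varphi$ the kernel is dominated by its top term and the contribution is $o(\sqrt n)$; the boundary $\partial D_\varphi$ and the set $\{\Delta\varphi=0\}$ have measure zero and contribute negligibly. To pass from pointwise convergence to convergence of $\frac{1}{\sqrt n}\E N_n$ I would establish a dominated-convergence bound: a uniform estimate $\rho_n(x)\le C\sqrt n\, g(x)$ with $g\in L^1(\R)$, obtained from the growth hypothesis (\ref{growth}) which forces the weighted kernel to decay and localizes the mass of $\mu_e$.

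For general $\zeta$ with $\E|\zeta|^{2+\delta}<\infty$, the plan is to invoke the local universality of real zero correlations from \cite{TaoVu2}: the expected real zero density of $f_n^\zeta$ differs from that of the Gaussian $f_n$ by $o(\sqrt n)$ after integration, provided the relevant non-degeneracy and regularity of the coefficient profile $(c^n_j)$ holds (in particular the kernel $K_n$ must be controlled away from degeneracy at each scale). This step requires checking that the $c^n_j$ satisfy the hypotheses of the universality theorem — essentially that consecutive ratios $c^n_{j+1}/c^n_j$ vary slowly, which follows from the regularity of $\varphi$ and the standard computation of the moments $\int_\C |z|^{2j}e^{-2n\varphi}dz$ via Laplace's method (these integrals concentrate where $2n\varphi(r) - 2j\log r$ is minimized, i.e. on a circle determined by the equation $r\varphi'(r) = j/n$). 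Combining the universality reduction with the Gaussian asymptotics gives (\ref{azeros}).

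The main obstacle I anticipate is the uniform integrable domination of $\rho_n$ needed to justify the limit of the integral (\ref{ek}) — in particular, controlling the behavior of $\rho_n$ near the boundary $\partial D_\varphi\cap\R$, where the Bergman-kernel expansion transitions from bulk to exponential decay and where naive bounds on $\partial_x\partial_y\log K_n$ can blow up. Handling this likely requires either a quantitative off-diagonal decay estimate for the weighted kernel (a Christoffel-function lower bound plus a Bergman-kernel upper bound) or an explicit residual estimate from Laplace's method that is uniform in a neighborhood of the boundary circle. The second potential difficulty is verifying that the precise hypotheses of \cite{TaoVu2} apply to the present $c^n_j$; if they do not apply verbatim, one would instead run a direct comparison argument (replacing Gaussian coefficients by $\zeta$-coefficients one at a time, controlling the change in $\E N_n$ via a repulsion/anti-concentration estimate for $f_n$ near the real axis), which is technically heavier but standard.
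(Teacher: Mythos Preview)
Your overall strategy matches the paper's: Kac--Rice/Edelman--Kostlan for the Gaussian case, Bergman kernel asymptotics for the local density, and Tao--Vu universality for general $\zeta$. The pointwise limit $\frac{1}{\sqrt n}\rho_n(x)\to\frac{1}{\pi}\sqrt{\tfrac12\Delta\varphi(x)}$ on $\mathcal B_\varphi\cap\R$ is exactly what the paper establishes (its Theorem~\ref{corelation}).

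The substantive difference is how to dispose of the contribution outside the bulk. You propose an $L^1$ domination $\rho_n(x)\le C\sqrt n\,g(x)$ and flag the boundary $\partial D_\varphi\cap\R$ as the hard part. The paper avoids this entirely: instead of bounding the density, it proves a probabilistic concentration result for \emph{all} (complex) zeros, namely that for any open $U$ with null boundary, $\frac1n\mathcal N_U(f_n)\to\mu_e(U)$ with exponentially small failure probability (its Theorem~\ref{thm2}, via the uniform convergence $\frac{1}{2n}\log K_n(z,z)\to U_\varphi(z)$ and a small-ball estimate). Since $\mu_e$ puts full mass on $D_\varphi$, the expected number of real zeros outside a compact piece of $\mathcal B_\varphi\cap\R$ is $o(n)$, hence $o(\sqrt n)$ after normalization is automatic. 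This sidesteps the boundary transition of the Bergman kernel altogether; your dominated-convergence route may be workable but is strictly harder, and your own diagnosis of the obstacle is accurate.

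For the non-Gaussian step, you are right that the paper invokes \cite{TaoVu2}, but the verification is not via slow variation of $c^n_{j+1}/c^n_j$. The paper checks the hypotheses of the replacement principle \cite[Theorem~3.1]{TaoVu2} directly: (i) concentration of $\frac1n\log|f_n^\zeta(z)|$ around $U_\varphi(z)$, proved using a large-deviations lemma (its Proposition~\ref{orthogonal}) that at least $\delta n$ of the terms $c^n_j|z|^j$ exceed $e^{n(U_\varphi(z)-3\epsilon)}$, combined with Kolmogorov--Rogozin anti-concentration; (ii) comparability of log-magnitude via the same proposition and \cite[Theorem~4.5]{TaoVu2}; and (iii) weak real/complex repulsion for the Gaussian model via Kac--Rice and the Bergman asymptotics. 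So the key auxiliary ingredient you are missing is this ``many large monomials'' proposition, obtained from Varadhan's lemma applied to the measures $a_n^{-1}e^{-2n\varphi}dz$; it replaces both the Laplace-method coefficient analysis you sketch and serves as the input to the Tao--Vu hypotheses.
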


In order to prove Theorem \ref{main} for real Gaussian $\zeta=N_{\R}(0,1)$, we utilize Edelman and Kostlan's \cite{EdK} approach together with near and off diagonal Bergman kernel asymptotics obtained in \cite{Berman,B9}. Then the assertion of Theorem \ref{main} is a consequence of Theorem \ref{thm2} which indicates that with high probability (complex) zeros of a random polynomial $f_n$ accumulate in the bulk. For non-Gaussian case, we adapt the replacement principle of \cite{TaoVu2} to our setting in order to prove that $\E N_n^{\zeta}$ has universal property in the sense that under mild moment assumptions the $\lim_{n\to \infty}\frac{1}{\sqrt{n}}\E N_n^{\zeta}$ is independent of the choice of the random variable $\zeta.$ 
 
  In the special case, $\varphi(z)=\frac{|z|^2}{2}$ we obtain $c_j^n=\sqrt{\frac{n^{j+1}}{\pi j!}}$ which in turn gives rise to scaled Weyl polynomials and we recover results of Edelman \& Kostlan \cite{EdK}  and Tao \& Vu \cite{TaoVu2}. In \S4, we provide a more general family of circular weights and calculate the asymptotic expected number of real zeros for the corresponding ensemble of random polynomials.

Finally, I would like to thank the referee whose comments improved the exposition.

\section{Background}
Let $\varphi:\C\to\R$ be a $\mathscr{C}^{2}$ function satisfying 
\begin{equation}\label{gr}
\varphi(z)\geq (1+\epsilon)\log|z| \ \text{for} \ |z|\gg1
\end{equation} for some $\epsilon>0.$ In what follows we denote the set of all subharmonic functions on $\C$ by $SH(\C)$ and we let $L(\C)$ denote the class of all $u\in SH(\C)$ with the property that $u(z)-\log|z|$ is bounded above as $|z|\to \infty.$ One can define the \textit{equilibrium potential}
$$U_{\varphi}(z):=\sup\{u(z): u \in L(\C),\ \text{and}\ u\leq \varphi\ \text{on}\ \C\}.$$
It is well known that $U_{\varphi}(z)-\log^+|z|$ is bounded on $\C,$ where $\log^+|z|=\max(0,\log|z|)$ (see \cite[\S1.4]{SaffTotik} for details). 
Throughout this note we assume that $\varphi(z)$ is non-negative and radially symmetric (i.e. $\varphi(z)=\varphi(|z|)$ for $z\in \C$). 
\begin{comment}
Furthermore, we assume that $\varphi(z)$ is radially symmetric (i.e. $\varphi(z)=\varphi(|z|)$ for $z\in \C$) 
satisfying one of the following conditions:
\begin{equation}\label{c1}
r\varphi'(r)\ \text{is increasing on}\ (0,\infty)
\end{equation}
or
\begin{equation}\label{c2}
\varphi\ \text{is convex on}\ (0,\infty).
\end{equation}
Then it is easy to see that (cf. \cite[pp. 245]{SaffTotik})
$$U_{\varphi}(z)= \begin{cases}
\varphi(r_0) & |z| \leq r_0\\
\varphi(z) & r_0<|z|<R_0\\
\log|z|+\varphi(R_0)-\log R_0 & |z|\geq R_0
\end{cases}$$
\end{comment}

In this special setting there is at least one other way of obtaining $U_{\varphi}$ as follows. We may define \\
$$\Phi:\R\to\R$$
$$\Phi(s):=\varphi(z)$$ where $s=\log|z|$ denotes logarithmic coordinate. Then it turns out that
\begin{equation}\label{eqp}
\Phi_{\varphi}(s):=\sup\{v(s): v\ \text{is convex,}\ \|\frac{dv}{ds}\|_{\infty}\leq 1\ \text{and}\ v\leq \Phi\ \text{on} \ \R\}
\end{equation}
satisfies $\Phi_{\varphi}(s)=U_{\varphi}(z)$ for $s=\log|z|.$ Here, the condition $\|\frac{dv}{ds}\|_{\infty}\leq 1$ ensures that the upper envelope $U_{\varphi}$ has logarithmic growth at infinity. In particular, the graph of $U_{\varphi}$ is the convex hull that of $\varphi.$

The \textit{weighted equilibrium measure} is defined as Laplacian (in the sense of distributions) of the equilibrium potential 
$$\mu_e:=\frac{1}{2\pi}\Delta U_{\varphi}.$$ %=\frac{1}{2\pi}(r\varphi'(r))'drd\theta$$ 
 It follows from \cite{Berman} that  the measure $\mu_e$ is supported on the compact set  $$D_{\varphi}=\{z\in \C: U_{\varphi}(z)=\varphi(z)\}.$$
We also remark that $\mu_e$ is the unique minimizer of the weighted energy functional
$$I_{\varphi}(\nu):=-\int \int \log|z-w| d\nu(z)d\nu(w)+2\int \varphi(z) d\nu(z)$$ over all probability measures supported on $D_{\varphi}$ (see \cite[Theorem 3.1.3]{SaffTotik}). Following \cite{SaffTotik}, the constant $F_\varphi:=I(\mu_e)-\int\varphi d\mu_e$ is called the \textit{modified Robin constant} for $\varphi.$ 

The inner product
\begin{equation*}
\langle f,g\rangle=\int_{\C}f(z)\overline{g(z)}e^{-2n\varphi(z)}dz
\end{equation*}
induces an $L^2$-norm, denoted by $\|\cdot\|_{\varphi}$, on the vector space of polynomials $\mathcal{P}_n$ of degree at most $n.$ Since $\varphi$ is radially symmetric $P_j^n(z)=c^n_jz^j$ form an orthonormal basis with respect to this norm when $c_j^n:=(\|z^j\|_{\varphi})^{-1}.$ It is well known that (cf. \cite{SaffTotik})
\begin{equation}\label{coef}
\lim_{n\to \infty}\frac1n\log|c_n^n|=F_{\varphi}. 
\end{equation}

The next proposition is of independent interest and will be useful in the sequel. It implies that the equilibrium potential can be obtained as upper envelope $$U_{\varphi}(z)=(\limsup_{j\to \infty}\frac{1}{n}\log|P^n_j(z)|)^*$$ where $P_j^n(z)=c_j^nz^j$ denote orthonormal polynomials and $u^*(z):=\displaystyle\limsup_{w\to z}u(w)$ denotes the upper semi-continuous regularization of the function $u.$ 
\begin{prop}\label{orthogonal}
For every $\epsilon>0$ and $z\in \C\setminus \{0\}$ there exists $\delta>0$ such that for sufficiently large $n$
$$\#\{j\in\{0,\dots,n\}: c^n_j|z|^j>e^{n(U_{\varphi}(z)-3\epsilon)}\}\geq \delta n.$$
\end{prop}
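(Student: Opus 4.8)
The plan is to show that $\tfrac1n\log\bigl(c^n_j|z|^j\bigr)$, viewed as a function of the ratio $t=j/n$, converges uniformly on compact subintervals of $(0,1)$ to an explicit concave function $g$, to identify $\max_{[0,1]}g$ with $U_\varphi(z)$, and then to exploit the continuity of $g$ near its maximizer.

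\emph{Step 1 (reduction to a one–variable Laplace estimate).} Fix $z\neq0$ and set $s_0=\log|z|$. By radial symmetry and the substitution $r=e^{s}$,
$$(c^n_j)^{-2}=\|z^j\|_\varphi^2=2\pi\int_0^{\infty}r^{2j+1}e^{-2n\varphi(r)}\,dr=2\pi\int_{\R}e^{\,2n\bigl[\frac{j+1}{n}s-\Phi(s)\bigr]}\,ds,$$
where $\Phi(s)=\varphi(e^{s})$. Let $\Phi^{*}(t)=\sup_{s\in\R}\bigl(ts-\Phi(s)\bigr)$ be the Legendre transform. Hypothesis (\ref{gr}) forces $\liminf_{s\to+\infty}\Phi(s)/s>1$, while $\Phi(s)\to\varphi(0)$ as $s\to-\infty$ since $\varphi\in\mathscr{C}^{2}$; hence $\Phi^{*}$ is finite on $[0,1]$ (indeed just past $1$), and for $t$ in a compact subinterval of $(0,1)$ the maximizer in $s$ stays in a fixed compact set. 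A standard Laplace argument — lower bound by integrating over a short interval about the maximizer (using local Lipschitz continuity of $\Phi$), upper bound by splitting off the tails $|s|\ge M$, which are uniformly subdominant by (\ref{gr}) for $s\to+\infty$ and by $\Phi\ge0$ for $s\to-\infty$ — gives, uniformly for $t=j/n$ in a compact subinterval of $(0,1)$,
$$\tfrac1n\log\bigl(c^n_j|z|^j\bigr)=t\,s_0-\Phi^{*}(t)+o(1)=:g(t)+o(1)\qquad(n\to\infty).$$
Here $g(t):=ts_0-\Phi^{*}(t)$ is concave on $[0,1]$, and continuous there — convexity of $\Phi^{*}$ handles $(0,1]$, and right-continuity at $t=0$ follows from $\Phi\ge0$ together with (\ref{gr}) by an elementary estimate.

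\emph{Step 2 (identification of the maximum).} For $t\in[0,1]$ the affine map $x\mapsto tx-\Phi^{*}(t)$ is a minorant of $\Phi$, by definition of $\Phi^{*}$, and has slope $t\in[0,1]$; hence it competes in (\ref{eqp}), so $tx-\Phi^{*}(t)\le\Phi_\varphi(x)$ for all $x$, and taking $x=s_0$ gives $g(t)\le\Phi_\varphi(s_0)=U_\varphi(z)$. Conversely, $\Phi_\varphi$ is nondecreasing: a point of negative slope would, by convexity, force $\Phi_\varphi\to+\infty$ as $s\to-\infty$, contradicting $\Phi_\varphi\le\Phi$ and boundedness of $\Phi$ near $-\infty$. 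Thus $\Phi_\varphi$ has at $s_0$ a supporting line of some slope $t^{*}\in[0,1]$, i.e. $\Phi_\varphi(s)\ge U_\varphi(z)+t^{*}(s-s_0)$ for all $s$; combined with $\Phi_\varphi\le\Phi$ this gives $\Phi(s)-t^{*}s\ge U_\varphi(z)-t^{*}s_0$, i.e. $\Phi^{*}(t^{*})\le t^{*}s_0-U_\varphi(z)$, i.e. $g(t^{*})\ge U_\varphi(z)$. Hence $\max_{t\in[0,1]}g(t)=U_\varphi(z)$.

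\emph{Step 3 (conclusion).} Given $\epsilon>0$, continuity of $g$ and $\max_{[0,1]}g=U_\varphi(z)$ show that $\{t\in[0,1]:g(t)>U_\varphi(z)-2\epsilon\}$ is nonempty and relatively open, hence contains a closed subinterval $[\alpha,\beta]$ with $0<\alpha<\beta<1$. By Step 1, for all large $n$ every $j$ with $j/n\in[\alpha,\beta]$ satisfies $\tfrac1n\log(c^n_j|z|^j)>g(j/n)-\epsilon>U_\varphi(z)-3\epsilon$, i.e. $c^n_j|z|^j>e^{\,n(U_\varphi(z)-3\epsilon)}$; there are at least $(\beta-\alpha)n-1\ge\delta n$ such $j$ once $n$ is large, with $\delta:=(\beta-\alpha)/2>0$, which proves the proposition.

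The main obstacle is the uniformity in $j$ of the Laplace asymptotics for $\|z^j\|_\varphi^2$: the growth hypothesis (\ref{gr}) is exactly what renders the tails of $\int_{\R}e^{2n[(t+1/n)s-\Phi(s)]}ds$ uniformly subdominant once $t=j/n$ is bounded away from $0$, while $\mathscr{C}^{2}$–regularity supplies the matching lower bound. Restricting $t$ to compact subintervals of $(0,1)$ skirts the genuinely delicate regime $j\approx0$ or $j\approx n$ (where the maximizer in $s$ runs off to $\mp\infty$ and $\Phi^{*}$ may be discontinuous), at no cost since only a positive density of indices is needed.
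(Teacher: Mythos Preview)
Your proof is correct and follows essentially the same route as the paper's: both compute the exponential asymptotics of $c^n_{tn}$ as the Legendre--Fenchel transform $\Phi^*$, identify $\sup_{t\in[0,1]}\bigl(ts_0-\Phi^*(t)\bigr)$ with $\Phi_\varphi(s_0)=U_\varphi(z)$ via the biconjugate, and then use (upper-semi)continuity of $t\mapsto ts_0-\Phi^*(t)$ near the maximizer to produce an interval $\mathcal J\subset[0,1]$ of indices. The only noteworthy difference is packaging: the paper obtains the pointwise limit $-\tfrac1n\log c^n_{tn}\to\Phi^*(t)$ by invoking an LDP for the measures $a_n^{-1}e^{-2n\varphi}\,dz$ together with Varadhan's lemma, whereas you derive it by a direct Laplace-method estimate on $\int_{\R}e^{2n[(t+1/n)s-\Phi(s)]}\,ds$, which has the advantage of yielding the convergence \emph{uniformly} for $t$ in compact subintervals of $(0,1)$ and thereby makes the passage from ``$g(t)>U_\varphi(z)-2\epsilon$ on $\mathcal J$'' to ``$c^n_j|z|^j>e^{n(U_\varphi(z)-3\epsilon)}$ for all $j/n\in\mathcal J$'' completely explicit (in the paper this uniformity is implicit, relying on convexity of $t\mapsto-\tfrac1n\log c^n_{tn}$).
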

\begin{proof}
We denote the probability measures $\mu_n=\frac{1}{a_n}e^{-2n\varphi(z)}dz$ where $a_n:=\int_{\C}e^{-2n\varphi(z)}dz$.  Since $\varphi(z)$ is non-negative, the sequence of measures $\{\mu_n\}_{n=1}^{\infty}$ satisfies large deviation principle (LDP) on $\C$ with the rate function $2\varphi(z)$ (see e.g. \cite[1.1.5]{DeSt}). Next, we define $$c^n_{tn}:=(\int_{\C}|z|^{2nt}e^{-2n\varphi(z)}dz)^{-\frac12}$$ for $t\in[0,1].$ Then by Varadhan's lemma \cite[Theorem 2.1.10]{DeSt} and (\ref{gr}), for every $t\in [0,1]$
\begin{eqnarray*}
-\lim_{n\to \infty}\frac1n\log c^n_{tn} &=&\sup_{r>0}(t\log r-\varphi(r))\\
&=& \sup_{s\in \R}(ts-\varphi(e^s))\\
&=:&u(t)
\end{eqnarray*}
where $u(t)$ (being  Legendre-Fenchel transform of $\varphi$) is a lower-semicontinuous convex function. Moreover,  Legendre-Fenchel transform of $u(t)$ coincides with convex hull of $\Phi(s)$ which is nothing but $\Phi_{\varphi}(s)$ defined in (\ref{eqp}). Thus, for every $s\in \R$ there exists $t_0\in [0,1]$ such that
$$st_0-u(t_0)>\Phi_{\varphi}(s)-\epsilon$$ and by lower-semicontinuity there exists an interval $\mathcal{J}\subset [0,1]$ containing $t_0$ such that
$$st-u(t)>\Phi_{\varphi}(s)-2\epsilon\ \ \text{for}\ t\in \mathcal{J}.$$ This implies that for sufficiently large $n$ and $s=\log|z|$ we have
$$\frac1n \log (c^n_{tn}|z|^{nt})>U_{\varphi}(z)-3\epsilon$$ for $t\in \mathcal{J}.$ 
Now, letting $\mathcal{J}_n:=\{j\in \{0,\dots,n\}: \frac{j}{n}\in \mathcal{J}\}$ we obtain $$\#\mathcal{J}_n\geq \frac{n}{2}|\mathcal{J}|$$ for sufficiently large $n.$
\end{proof}
\subsection{Bergman kernel asymptotics}
Growth condition (\ref{gr}) ensures that every polynomial $f(z)$ of degree at most $n$ has finite weighted $L^2$-norm defined by (\ref{n}). We denote the corresponding Hilbert space of polynomials by $(\mathcal{P}_n,\|\cdot\|_{\varphi}).$ For any given orthonormal basis $\{P_j^n\}_{j=0}^n$ of $\mathcal{P}_n$ the \textit{Bergman kernel} of $\mathcal{P}_n$ may be defined as
$$K_n(z,w)=\sum_{j=0}^{n}P_j^n(z)\overline{P_j^n(w)}.$$
We also denote its derivatives 
$$K_n^{(1,0)}(z,w)=\sum_{j=0}^n(P_j^n(z))'\overline{P_j^n(w)}\ \text{and}\ K_n^{(1,1)}(z,w)=\sum_{j=0}^n(P_j^n(z))'\overline{(P_j^n(w))'}. $$
 For fixed $z_0\in \mathcal{B}_{\varphi}$, we let $g:\C\to \C$ be the holomorphic function  $$g(z)=\varphi(z_0)+2\frac{\partial\varphi}{\partial z}(z_0)(z-z_0)$$ and we define 
 $$I_n(u,v):=\exp(-n[g(z_0+\frac{u}{\sqrt{n}})+\overline{g(z_0+\frac{v}{\sqrt{n}})}]).$$
We studied Bergman kernel asymptotics in \cite{B9} (cf. \cite{Christ,Lindholm,Delin,Berman}). The following near diagonal asymptotics of Bergman kernel was obtained in \cite[Theorem 2.3]{B9}:
\begin{thm}\label{nda}
Let $\varphi:\C\to \R$ be a $\mathscr{C}^2$ weight function satisfying (\ref{gr}) and $z\in \mathcal{B}_{\varphi}$ be fixed point. Then 
\begin{equation}\label{nda1} \frac1nK_n(z+\frac{u}{\sqrt{n}},z+\frac{v}{\sqrt{n}})I_n(u,v)\to \frac{1}{2\pi}\Delta\varphi(z)\exp(\frac12\Delta\varphi(z)u\overline{v})\ \text{as}\ n\to \infty
\end{equation}
in $C^{\infty}$-topology on compact subsets of $\C_u\times \C_v.$
\end{thm}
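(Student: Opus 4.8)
The plan is to prove (\ref{nda1}) --- writing $z_0$ for the fixed bulk point (denoted $z$ in the statement) --- by the now-standard combination of the extremal characterization of the Bergman kernel with H\"ormander's $L^2$-estimate for $\bar\partial$, in the spirit of \cite{Lindholm,Berman}; since $\varphi$ is only of class $\mathscr{C}^2$ one can extract only the leading (Bargmann--Fock) term, not a full asymptotic expansion. Recall the variational description
$$K_n(z,z)=\sup\{|f(z)|^2:\ f\in\mathcal{P}_n,\ \|f\|_\varphi\le 1\},$$
and that off-diagonal values are controlled by diagonal ones via Cauchy--Schwarz. The first step is a \emph{localization}: fix nested discs $D''\subset D'\subset D=D(z_0,r)\subset\mathcal{B}_\varphi$ with $\Delta\varphi\ge c_0>0$ on $D$ and $\mathrm{dist}(D'',\partial D')>0$, let $K_n^{\mathrm{loc}}$ denote the Bergman kernel of the holomorphic functions on $D$ with norm $\int_D|f|^2e^{-2n\varphi}$, and show that for $z=z_0+u/\sqrt n,\ w=z_0+v/\sqrt n\in D''$,
$$\tfrac1n K_n(z,w)\,I_n(u,v)=\tfrac1n K_n^{\mathrm{loc}}(z,w)\,I_n(u,v)+o(1)$$
uniformly on compacta in $(u,v)$; this reduces the theorem to $K_n^{\mathrm{loc}}$.

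To prove the localization I would argue as follows. One inequality is free: restricting a polynomial to $D$ cannot increase its weighted norm, so $K_n(z,z)\le K_n^{\mathrm{loc}}(z,z)$. For the reverse, take the extremal local section $f=K_n^{\mathrm{loc}}(\cdot,z)/\sqrt{K_n^{\mathrm{loc}}(z,z)}$, multiply by a cutoff $\chi$ with $\chi\equiv 1$ on $D'$ and $\mathrm{supp}\,\chi\subset D$, and correct the residual error by solving $\bar\partial u=f\,\bar\partial\chi$ with the H\"ormander estimate against a globally plurisubharmonic weight that agrees with $n\varphi+O(1)$ on $D$ (where $\Delta(n\varphi)\ge nc_0$), stays $\le n\varphi+O(1)$ globally, and exceeds $(n+1)\log|z|$ near infinity --- all available thanks to (\ref{growth}) --- so that the entire function $F:=\chi f-u$ is forced to lie in $\mathcal{P}_n$ (the standard device $nU_\varphi+\theta\log(1+|z|^2)$ with fixed $\theta\in(\tfrac12,1)$ does this for large $n$). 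This yields $\|u\|_\varphi^2\le C/n$; moreover $u$ is holomorphic on $D'$, so the sub-mean-value inequality at scale $n^{-1/2}$ gives $|u(z)|^2e^{-2n\varphi(z)}\le C$, which is $o\big(K_n^{\mathrm{loc}}(z,z)e^{-2n\varphi(z)}\big)$ since $K_n^{\mathrm{loc}}(z,z)e^{-2n\varphi(z)}\to\infty$ (in fact $\sim\tfrac{n}{2\pi}\Delta\varphi(z_0)$; see below). Hence $K_n(z,z)\ge|F(z)|^2/\|F\|_\varphi^2\ge(1-o(1))K_n^{\mathrm{loc}}(z,z)$, uniformly in $z\in D''$, and the off-diagonal statement follows by the same $\bar\partial$-surgery applied at two points (or by polarization).

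Next I would \emph{rescale and identify the Fock model}. With $z=z_0+u/\sqrt n$, the $\mathscr{C}^2$ Taylor expansion of $\varphi$ at $z_0$ gives, locally uniformly in $u$,
$$2n\varphi\big(z_0+\tfrac{u}{\sqrt n}\big)=2n\,\mathrm{Re}\,\widetilde{g}\big(z_0+\tfrac{u}{\sqrt n}\big)+\tfrac12\Delta\varphi(z_0)\,|u|^2+o(1),$$
where $\widetilde{g}$ is the holomorphic $2$-jet of $\varphi$ at $z_0$ (replacing the affine $g$ of the statement by $\widetilde{g}$ multiplies both sides of (\ref{nda1}) by the same nonvanishing holomorphic factor, so it is harmless). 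The factor $I_n$ is built precisely to strip off this (anti)holomorphic exponential, so that $\tfrac1n K_n^{\mathrm{loc}}(z,w)I_n(u,v)$ is, up to $o(1)$, the value at unit scale of the Bergman kernel of the holomorphic functions on the expanding disc $\{|u|<r\sqrt n\}$ equipped with the Gaussian weight $e^{-\frac12\Delta\varphi(z_0)|u|^2}$. Testing against the holomorphic function $e^{n\widetilde{g}}$ restricted to $D$ and evaluating a Gaussian integral gives $K_n^{\mathrm{loc}}(z,z)e^{-2n\varphi(z)}\ge(1-o(1))\tfrac{n}{2\pi}\Delta\varphi(z_0)$, while the crude bound $\tfrac1n K_n^{\mathrm{loc}}(z,z)e^{-2n\varphi(z)}\le C$ from the sub-mean-value inequality gives normality of the rescaled family; any subsequential limit is holomorphic in $u$, anti-holomorphic in $v$, and inherits the reproducing property, hence equals the (unique) reproducing kernel of the Bargmann--Fock space $\{F\ \text{entire}:\ \int_\C|F|^2e^{-\frac12\Delta\varphi(z_0)|u|^2}\,du<\infty\}$, which --- expanding in the orthogonal monomials $u^k$ --- is $\tfrac{1}{2\pi}\Delta\varphi(z_0)\exp\big(\tfrac12\Delta\varphi(z_0)u\bar v\big)$. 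Together with the localization this proves (\ref{nda1}) with locally uniform convergence.

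Finally, the upgrade to convergence in the $C^\infty$-topology on compacta of $\C_u\times\C_v$ is automatic: $F_n(u,v):=\tfrac1n K_n(z_0+\tfrac{u}{\sqrt n},z_0+\tfrac{v}{\sqrt n})I_n(u,v)$ is holomorphic in $u$, anti-holomorphic in $v$, and (being convergent) locally uniformly bounded, hence a normal family, and a locally uniformly convergent sequence of holomorphic functions converges together with all of its derivatives by Cauchy's integral formula, so the same holds for every mixed derivative $\partial_u^\alpha\partial_{\bar v}^\beta F_n$. I expect the main obstacle to be the localization step --- in particular, checking \emph{uniformly} for $z\in D''$ that the H\"ormander correction $F$ is a genuine polynomial of degree $\le n$ and that $F(z)$ is negligible next to $K_n^{\mathrm{loc}}(z,z)$ --- which is exactly where the growth hypothesis (\ref{growth}), the fact that $z_0\in D_\varphi$ (so $\varphi=U_\varphi$ near $z_0$), and the strict positivity $\Delta\varphi>0$ on the bulk are all used. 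Everything past the localization is Taylor's theorem together with the explicit Fock-space computation.
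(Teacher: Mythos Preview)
The paper does not prove Theorem~\ref{nda} at all: it is quoted verbatim from the author's earlier work \cite[Theorem~2.3]{B9}, with the parenthetical ``cf.~\cite{Christ,Lindholm,Delin,Berman}'' indicating its provenance. There is therefore no in-paper argument to compare against. Your outline --- localize via the extremal characterization and H\"ormander's $\bar\partial$-estimate, rescale to identify the Bargmann--Fock model, then upgrade to $C^\infty$ by holomorphicity --- is exactly the method of the cited references (Lindholm, Berman, Delin) and is almost certainly what \cite{B9} does as well; in that sense your proposal is ``the same approach as the paper,'' just supplied by you rather than by citation.

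One substantive remark. You correctly note that with merely $\mathscr{C}^2$ regularity the Taylor expansion forces you to work with the holomorphic $2$-jet $\widetilde g$ rather than the affine $g$ used to define $I_n$, and you dismiss the discrepancy as ``harmless'' because passing from $g$ to $\widetilde g$ multiplies both sides by the same nonvanishing factor $\exp\big(-\tfrac12\partial_z^2\varphi(z_0)\,u^2-\tfrac12\overline{\partial_z^2\varphi(z_0)}\,\bar v^2\big)$. That is true as a statement about equivalence of limits, but it means the right-hand side of (\ref{nda1}) as written (with the affine $I_n$) should carry that extra pure-holomorphic/anti-holomorphic exponential unless $\partial_z^2\varphi(z_0)=0$. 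For the applications in the present paper this is immaterial --- Theorem~\ref{corelation} only extracts the diagonal combination in (\ref{corelation1}), which is insensitive to multiplying $K_n$ by any factor of the form $h(z)\overline{h(w)}$ --- but if you intend your write-up to stand on its own you should either state the limit with $\widetilde g$ or record the extra factor explicitly.
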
	
Next, we use Theorem \ref{nda} in order to obtain diagonal asymptotics of the functions $K_n^{(1,0)}$ and $K_n^{(1,1)}:$

\begin{thm}\label{corelation}
Let $\varphi:\C\to \R$ be a $\mathscr{C}^2$ weight function satisfying (\ref{gr}) then
\begin{equation}\label{bkd1}
n^{-2}K_n^{(1,0)}(z,z)e^{-2n\varphi(z)}\to \frac{1}{\pi}\Delta\varphi(z)\frac{\partial \varphi}{\partial z}(z)
\end{equation}
and
\begin{equation}\label{bkd2}
n^{-3}K_n^{(1,1)}(z,z)e^{-2n\varphi(z)}\to \frac{1}{\pi}\Delta\varphi(z)\big(2\frac{\partial \varphi}{\partial z}(z)\frac{\partial \varphi}{\partial \overline{z}}(z)\big)
\end{equation}
uniformly on compact subsets of $\mathcal{B}_{\varphi}.$ In particular, if $\varphi$ is radially symmetric then for every real interval  $[a,b]\Subset \mathcal{B}_{\varphi}$
\begin{equation}\label{corelation1}
\frac{1}{\sqrt{n}}\frac{\sqrt{K_n^{(1,1)}(x,x)K_n(x,x)-(K_n^{(1,0)}(x,x))^2}}{K_n(x,x)}\to \sqrt{\frac{1}{2}\Delta\varphi(x)}
\end{equation}
uniformly on $[a,b].$
\end{thm}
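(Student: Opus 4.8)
The plan is to obtain \eqref{bkd1} and \eqref{bkd2} by differentiating the near-diagonal expansion of Theorem \ref{nda}, and then to derive \eqref{corelation1} from that same expansion after taking a logarithm, which is what is needed to defeat a leading-order cancellation.

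For the first step, fix $z_0\in\mathcal B_\varphi$ and set $\tilde K_n(u,v):=\tfrac1n K_n(z_0+\tfrac u{\sqrt n},z_0+\tfrac v{\sqrt n})I_n(u,v)$, so that Theorem \ref{nda} reads $\tilde K_n\to K_\infty$ in $C^\infty_{\mathrm{loc}}$ with $K_\infty(u,v)=\tfrac1{2\pi}\Delta\varphi(z_0)\exp(\tfrac12\Delta\varphi(z_0)u\bar v)$. Since $g$ is affine, $h_n(u):=g(z_0+\tfrac u{\sqrt n})=\varphi(z_0)+\tfrac2{\sqrt n}\tfrac{\partial\varphi}{\partial z}(z_0)\,u$ is affine in $u$ and $I_n(u,v)=\exp(-nh_n(u)-n\overline{h_n(v)})$, so $\tfrac1n K_n(z_0+\tfrac u{\sqrt n},z_0+\tfrac v{\sqrt n})=\tilde K_n(u,v)\exp(nh_n(u)+n\overline{h_n(v)})$. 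Applying $\partial_u$ (and using $\partial_u K_n(z_0+\tfrac u{\sqrt n},\cdot)=\tfrac1{\sqrt n}K_n^{(1,0)}$, $\,nh_n'\equiv 2\sqrt n\,\tfrac{\partial\varphi}{\partial z}(z_0)$) and then $\partial_{\bar v}$ and setting $u=v=0$, the Leibniz rule will give
\begin{equation*}
n^{-2}K_n^{(1,0)}(z_0,z_0)e^{-2n\varphi(z_0)}=\tfrac1{\sqrt n}\partial_u\tilde K_n(0,0)+2\tfrac{\partial\varphi}{\partial z}(z_0)\tilde K_n(0,0)
\end{equation*}
and an analogous identity whose leading term is $4\big|\tfrac{\partial\varphi}{\partial z}(z_0)\big|^2\tilde K_n(0,0)$ and whose remaining terms are $O(n^{-1/2})$. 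Since all $C^k$-norms of $\tilde K_n$ near the origin stay bounded and $\tilde K_n(0,0)\to\tfrac1{2\pi}\Delta\varphi(z_0)$, letting $n\to\infty$ and using $|\tfrac{\partial\varphi}{\partial z}|^2=\tfrac{\partial\varphi}{\partial z}\tfrac{\partial\varphi}{\partial\bar z}$ (valid since $\varphi$ is real) will yield \eqref{bkd1} and \eqref{bkd2}; the ``uniformly on compacta'' assertion will follow because the convergence in Theorem \ref{nda} is locally uniform in the base point (it is established that way in \cite{B9}).

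For \eqref{corelation1} the key point is that substituting the first-order asymptotics \eqref{bkd1}, \eqref{bkd2} and $\tfrac1n K_n(x,x)e^{-2n\varphi(x)}\to\tfrac1{2\pi}\Delta\varphi(x)$ into the numerator $K_n^{(1,1)}K_n-(K_n^{(1,0)})^2$ makes the leading $n^4e^{4n\varphi(x)}$ contributions cancel identically, so those statements alone are not enough. Instead I would write $\mathcal K_n(s,t):=\sum_{j=0}^n(c^n_j)^2 s^j t^j$, which equals $K_n(s,t)$ for real $s,t$, and use the Edelman--Kostlan identity: for real $x$,
\begin{equation*}
\frac{K_n^{(1,1)}(x,x)K_n(x,x)-(K_n^{(1,0)}(x,x))^2}{K_n(x,x)^2}=\partial_s\partial_t\log\mathcal K_n(s,t)\big|_{s=t=x}.
\end{equation*}
Substituting $s=x+u/\sqrt n$, $t=x+v/\sqrt n$ with $u,v\in\R$ and invoking radial symmetry (so $\tfrac{\partial\varphi}{\partial z}(x)=\tfrac12\varphi'(x)$ is real and $\overline{g(x+v/\sqrt n)}=g(x+v/\sqrt n)$), one finds $\mathcal K_n(x+\tfrac u{\sqrt n},x+\tfrac v{\sqrt n})=n\,e^{2n\varphi(x)}e^{\sqrt n\varphi'(x)(u+v)}\tilde K_n(u,v)$, whence $\log\mathcal K_n(x+\tfrac u{\sqrt n},x+\tfrac v{\sqrt n})$ agrees with $\log\tilde K_n(u,v)$ up to terms affine in $(u,v)$, which $\partial_u\partial_v$ annihilates. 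Therefore the left-hand side above equals $n\,\partial_u\partial_v\log\tilde K_n(u,v)\big|_{u=v=0}$. Since $\tilde K_n(0,0)\to\tfrac1{2\pi}\Delta\varphi(x)>0$ uniformly on $[a,b]$ (as $\Delta\varphi>0$ there), the convergence $\tilde K_n\to K_\infty$ upgrades to $\log\tilde K_n\to\log K_\infty$ near the origin; restricting to real $v$ gives $\log K_\infty(u,v)=\mathrm{const}+\tfrac12\Delta\varphi(x)uv$, so $\partial_u\partial_v\log\tilde K_n(0,0)\to\tfrac12\Delta\varphi(x)$. Taking square roots and dividing by $\sqrt n$ then produces \eqref{corelation1}, uniformly on $[a,b]$.

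I expect the main obstacle to be precisely this leading-order cancellation in the numerator of \eqref{corelation1}: one cannot extract it from the first-order diagonal statements \eqref{bkd1}--\eqref{bkd2} but must use the full $C^\infty$ (not merely $C^0$) content of Theorem \ref{nda}, most cleanly by running everything through $\log\tilde K_n$. The one remaining technical point to pin down is the local uniformity in the base point of the expansion in Theorem \ref{nda}, which underlies each ``uniformly on compact subsets'' and ``uniformly on $[a,b]$'' claim.
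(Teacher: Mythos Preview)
Your derivation of \eqref{bkd1} and \eqref{bkd2} via Leibniz on the near-diagonal expansion is exactly the paper's; the paper also obtains the ``uniformly on compacta'' upgrade by covering $K$ with balls of radius $n^{-1/2}$, which is precisely the mechanism you anticipate under the heading of local uniformity in the base point.

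For \eqref{corelation1} you take a slightly different, and cleaner, route. You correctly diagnose that the leading $n^4e^{4n\varphi}$ contributions in $K_n^{(1,1)}K_n-(K_n^{(1,0)})^2$ cancel, so the diagonal statements \eqref{bkd1}--\eqref{bkd2} alone are insufficient; your remedy is the Edelman--Kostlan log-derivative identity, reducing everything to $\partial_u\partial_{\bar v}\log\tilde K_n(0,0)\to\partial_u\partial_{\bar v}\log K_\infty(0,0)=\tfrac12\Delta\varphi(x)$, which needs only one $C^2$-limit. The paper instead keeps the \emph{exact} Leibniz identities (not merely their limits) for $n^{-1}K_nI_n$, $n^{-2}K_n^{(1,0)}I_n$, $n^{-3}K_n^{(1,1)}I_n$ in terms of $F_n:=\tilde K_n$ and its derivatives, and records in \eqref{nda3} the $1/n$-term coming from $\tfrac1n\partial_u\partial_{\bar v}F_n$; when these exact identities are multiplied out, the $O(1)$ and $O(n^{-1/2})$ pieces cancel algebraically and what remains is $\tfrac1n\bigl(F_n\,\partial_u\partial_{\bar v}F_n-\partial_uF_n\,\partial_{\bar v}F_n\bigr)=\tfrac1nF_n^{2}\,\partial_u\partial_{\bar v}\log F_n$, i.e.\ the same quantity your method isolates in one stroke. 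Your packaging has the advantage that the cancellation is built into the formula; the paper's phrasing ``$+o(1)$'' in \eqref{nda3} somewhat hides that it is the exact identities, not the stated asymptotics, that do the work. Radial symmetry enters in both arguments only to make $\partial_z\varphi(x)$ real, exactly as you use it.
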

\begin{proof}
We fix a compact subset $K\subset \mathcal{B}_{\varphi}$ and fix $z\in K.$ Then by Theorem \ref{nda} we have 
\begin{equation}\label{derivative}
\frac{\partial}{\partial u}[n^{-1}K_n(z+\frac{u}{\sqrt{n}},z+\frac{v}{\sqrt{n}})I_n(u,v)]\to \frac{1}{4\pi}(\Delta\varphi(z))^2\overline{v}\exp(\frac12\Delta\varphi(z)u\overline{v})
\end{equation}
as $n\to \infty.$ Note that the left hand side of (\ref{derivative}) is given by
 \begin{eqnarray*}
% \frac{1}{n\sqrt{n}}K_n^{(1,0)}(z+\frac{u}{\sqrt{n}},z_0+\frac{v}{\sqrt{n}})I_n(u,v)-\frac{1}{\sqrt{n}}K_n(z+\frac{u}{\sqrt{n}},z+\frac{v}{\sqrt{n}})I_n(u,v)2\frac{\partial\varphi}{\partial z}(z).
 \frac{I_n(u,v)}{\sqrt{n}}[\frac1nK_n^{(1,0)}(z+\frac{u}{\sqrt{n}},z_0+\frac{v}{\sqrt{n}})-2K_n(z+\frac{u}{\sqrt{n}},z+\frac{v}{\sqrt{n}})\frac{\partial\varphi}{\partial z}(z)]
 \end{eqnarray*}
 In particular, we have
 \begin{equation}\label{nda2}
n^{-2}K_n^{(1,0)}(z+\frac{u}{\sqrt{n}},z+\frac{v}{\sqrt{n}})I_n(u,v)\to \frac{1}{\pi}\Delta\varphi(z)\frac{\partial\varphi}{\partial z}(z)\exp(\frac12\Delta\varphi(z)u\overline{v})
\end{equation}
in $C^{\infty}$-topology on compact subsets of $\C_u\times \C_v.$ Then by covering $K$ with finitely many balls of radius $\frac{1}{\sqrt{n}}$ we deduce (\ref{bkd1}). Next, passing to $\frac{\partial}{\partial \overline{v}}$ on both sides of (\ref{nda2}) and using (\ref{nda2}) we see that
\begin{eqnarray}\label{nda3}
 \frac{1}{n^3}K_n^{(1,1)}(z+\frac{u}{\sqrt{n}},z+\frac{v}{\sqrt{n}})I_n(u,v)  &=&
 \frac{2}{\pi}\Delta\varphi(z)\frac{\partial\varphi}{\partial z}(z)\frac{\partial\varphi}{\partial \overline{z}}(z)\exp(\frac12\Delta\varphi(z)u\overline{v})\nonumber\\
&+&\frac{1}{n\pi} \big[\frac{1}{4}(\Delta\varphi(z))^2+(\frac12\Delta\varphi(z))^3u\overline{v}\big]\exp(\frac12\Delta\varphi(z)u\overline{v})+o(1)
\end{eqnarray}
as $n\to\infty.$ Hence, we infer (\ref{bkd2}).

Finally, using (\ref{nda1}), (\ref{nda2}), (\ref{nda3}) and the assumption $\varphi$ is radially symmetric we obtain (\ref{corelation1}).
\end{proof}

\subsection{Distribution of complex zeros}  In this section we assume that $a_j^n$ are independent copies of the real Gaussian $N_{\R}(0,1).$ We denote the number of (complex) zeros of a random polynomial $f_n$ in an open set $U$ by $\mathcal{N}_{U}(f_n).$ The main result of this section indicates that with high probability almost all zeros of a random polynomial are contained in the bulk $\mathcal{B}_{\varphi}:$
\begin{thm}\label{thm2}
Let $\epsilon>0$ and $U\Subset \C$ such that $\partial U$ has zero Lebesgue measure. Then
\begin{equation}
Prob_n\big[f_n:|\frac1n\mathcal{N}_{U}(f_n)-\int_{U}\Delta U_{\varphi}(z)|>\epsilon]\leq \exp(-C_{U,\epsilon} n) \ \text{for}\ n\gg1.
\end{equation} 
\end{thm}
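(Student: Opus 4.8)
The plan is to reduce the statement to a quantitative $L^1$--comparison of $u_n:=\tfrac1n\log|f_n|$ with the equilibrium potential $U_\varphi$, and then to establish that comparison by pairing an easy upper bound for $u_n$ (Cauchy--Schwarz together with the size of the Bergman kernel) with a small--ball lower bound extracted from Proposition~\ref{orthogonal}. Fix a closed ball $K$ with $\overline U\subset\operatorname{int}K\Subset\C$, and let $\nu_n:=\tfrac1n\sum_{f_n(\zeta)=0}\delta_\zeta$ be the normalized counting measure of the complex zeros, so $\tfrac1n\mathcal N_U(f_n)=\nu_n(U)$ and, by the Lelong--Poincar\'e formula, $\nu_n$ is a fixed constant multiple of $\Delta u_n$, just as $\mu_e$ is that multiple of $\Delta U_\varphi$. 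Fix once and for all smooth cut-offs with $\chi^-\le\mathbf 1_U\le\chi^+$, $\operatorname{supp}\chi^-\subset U$ and $\operatorname{supp}\chi^+\subset K$, squeezed against $\mathbf 1_U$ tightly enough that $\mu_e(\operatorname{supp}\chi^+)-\mu_e(\{\chi^-=1\})<\epsilon'$ for a prescribed $\epsilon'$; this is possible because $\partial U$ is Lebesgue--null and $\mu_e$ puts no mass on it. Two integrations by parts then give, for every polynomial $f_n$,
$$
\Bigl|\tfrac1n\mathcal N_U(f_n)-\int_U\Delta U_\varphi\Bigr|\ \le\ C_\chi\,\|u_n-U_\varphi\|_{L^1(K)}+C\epsilon',
$$
with $C_\chi,C$ fixed constants depending only on $\chi^\pm$. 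Choosing $\epsilon'$ small, it therefore suffices to prove: for every $\eta>0$ there is $c_\eta>0$ such that $\|u_n-U_\varphi\|_{L^1(K)}<\eta$ outside an event of probability $\le e^{-c_\eta n}$.

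For the upper bound, Cauchy--Schwarz gives $|f_n(z)|^2\le K_n(z,z)\sum_{j=0}^n(a^n_j)^2$. The random factor is a $\chi^2_{n+1}$ variable, hence $\le 4n$ outside an event of probability $\le e^{-cn}$. The deterministic factor satisfies $\tfrac1n\log K_n(z,z)\to 2U_\varphi(z)$ uniformly on $K$: with $s=\log|z|$ the function $s\mapsto\tfrac1n\log K_n(z,z)$ is convex, and the ingredients of the proof of Proposition~\ref{orthogonal} --- Varadhan's lemma and \eqref{gr}, giving $-\tfrac1n\log c^n_{tn}\to u(t)$ and $\sup_{t\in[0,1]}(ts-u(t))=\Phi_\varphi(s)=U_\varphi(z)$ --- show it converges pointwise to $2U_\varphi$. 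Hence $u_n\le U_\varphi+\eta/8$ everywhere on $K$ outside an event of probability $\le e^{-cn}$; this is a genuine pointwise bound, and $0\le U_\varphi\le\sup_K\varphi$ on $K$.

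For the lower bound, fix $z\in K\setminus\{0\}$ and a small $\epsilon_0>0$. By Proposition~\ref{orthogonal} there is an index $j_0$ with $c^n_{j_0}|z|^{j_0}>e^{n(U_\varphi(z)-3\epsilon_0)}$. Conditioning on $(a^n_j)_{j\ne j_0}$ we may write $f_n(z)=a^n_{j_0}w_0+R$, with $a^n_{j_0}\sim N_{\R}(0,1)$, $R$ conditionally constant, and $|w_0|=c^n_{j_0}|z|^{j_0}$; since $\max(|\Re w_0|,|\Im w_0|)\ge|w_0|/\sqrt2$, Gaussian anti-concentration in one variable yields $Prob_n(|f_n(z)|<\tau)\le C\tau\,e^{-n(U_\varphi(z)-3\epsilon_0)}$, and with $\tau=e^{n(U_\varphi(z)-M\epsilon_0)}$ ($M$ a large fixed constant) this reads $Prob_n\!\bigl(u_n(z)<U_\varphi(z)-M\epsilon_0\bigr)\le Ce^{-(M-3)\epsilon_0 n}$. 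Integrating over $z\in K$ (Fubini) and applying Markov's inequality, the set $B_n:=\{z\in K:\,u_n(z)<U_\varphi(z)-M\epsilon_0\}$ has $\operatorname{Leb}(B_n)\le e^{-\epsilon_0 n}$ outside an event of probability $\le C\operatorname{Leb}(K)\,e^{-(M-4)\epsilon_0 n}$; and the same pointwise bound at one fixed $z_*\in K$ gives $u_n(z_*)\ge -C_*$ outside an event of the same order. Now restrict to the intersection $G_n$ of all these events, where $Prob_n(G_n^c)\le e^{-c'n}$. There $u_n$ is subharmonic, $u_n\le C_0:=\sup_K U_\varphi+\eta/8$ on a neighborhood of $K$, and, by the sub--mean value inequality at $z_*$ on a ball containing $K$, $\int_K u_n\ge -C_1$; hence $\|C_0-u_n\|_{L^1(K)}\le C_2$ with $C_2$ an absolute constant. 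A standard self--improving integrability estimate for subharmonic functions bounded above then yields $\operatorname{Leb}\{z\in K:\,u_n(z)<-\lambda\}\le Ae^{-a\lambda}$ for $\lambda\ge\lambda_0$, with $A,a,\lambda_0$ depending only on $C_2$ and $K$. Splitting $K=(K\setminus B_n)\cup B_n$, using $|u_n-U_\varphi|\le M\epsilon_0$ on $K\setminus B_n$, and on $B_n$ further separating off $\{u_n<-a^{-1}\epsilon_0 n\}$ (of exponentially small measure), we get $\|u_n-U_\varphi\|_{L^1(K)}\le M\epsilon_0\operatorname{Leb}(K)+o(1)$; choosing $\epsilon_0$ with $M\epsilon_0\operatorname{Leb}(K)<\eta/2$ completes the reduction, hence the proof.

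The main obstacle is precisely this last passage from a small exceptional \emph{set} to a small $L^1$--defect. Proposition~\ref{orthogonal} only produces an exceptional set of small measure, not a pointwise lower bound on $u_n$, while the negative part of $\log|f_n|$ --- logarithmically singular at each of the up to $n$ zeros of $f_n$ --- could a priori carry a large $L^1$--mass on that set. Controlling it forces one to invoke the deterministic regularity of subharmonic functions bounded above, applied to $u_n$ with an $L^1$--bound that itself must be secured on the same high--probability event (via the sub--mean value inequality and one pointwise lower bound from the anti-concentration step), and the bookkeeping that keeps the total exceptional probability at $e^{-C_{U,\epsilon}n}$ lives here.
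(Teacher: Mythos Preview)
Your argument is correct and shares the paper's outer framework --- sandwich $\mathbf 1_U$ between smooth cutoffs, integrate by parts, and compare $\tfrac1n\log|f_n|$ with $U_\varphi$ --- but differs in two respects worth noting. For the pointwise lower bound on $\tfrac1n\log|f_n(z)|$, the paper (Lemma~\ref{lem1}) normalizes by $\sqrt{K_n(z,z)}$ so that $f_n(z)/\sqrt{K_n(z,z)}=\langle a,u(z)\rangle$ with $u(z)$ a unit vector, and then reads off the Gaussian small-ball bound $Prob_n[|\langle a,u(z)\rangle|<e^{-\epsilon n/2}]=O(e^{-\epsilon n})$ in one line; you instead isolate a single dominant coefficient via Proposition~\ref{orthogonal} and apply one-dimensional anti-concentration, which is slightly less direct but equally valid. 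More substantively, the paper states Lemma~\ref{lem1} as a \emph{uniform} pointwise two-sided bound over $\overline U$ and then integrates against $\Delta\psi^{\pm}$ as though that held literally --- but $f_n$ has zeros in $\overline U$ and $\Delta\psi^{\pm}$ is not sign-definite, so that passage is only a sketch. You confront exactly this issue by working with an honest $L^1(K)$ comparison and controlling the contribution of the small exceptional set through Cartan-type level-set estimates for subharmonic functions bounded above with controlled $L^1$ norm; this extra bookkeeping is what it actually takes to make the argument rigorous, and your identification of it as ``the main obstacle'' is on point.
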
 
 In the setting of Gaussian analytic functions, Sodin and Tsirelson \cite{SoT2} obtained the analogue result (see also \cite{SZZ} for the setting of holomorphic sections). The results of \cite{SoT2,SZZ} are stronger in the sense that the speed of convergence is $n^2.$ Here, we give a simpler argument at the cost of losing the optimal estimate in the speed. First, we prove a preparative lemma:
\begin{lem}\label{lem1}
Let $\epsilon>0$ and $U\Subset \C.$ Then for sufficiently large $n$ there exists $E_n\subset \mathcal{P}_n$ such that $Prob_n(E_n)\leq e^{-C_{\epsilon}n}$ and 
$$|\log|f(z)|-U_{\varphi}(z)|\leq \epsilon$$ for every $f\in \mathcal{P}_n\setminus E_n$ and $z\in \overline{U}$
\end{lem}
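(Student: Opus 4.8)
The plan is to prove the two one–sided estimates $\frac1n\log|f(z)|\le U_\varphi(z)+\epsilon$ and $\frac1n\log|f(z)|\ge U_\varphi(z)-\epsilon$ separately (here and below the relevant quantity is the normalized logarithm $\frac1n\log|f|$, as forced by the growth $\frac1n\log|c^n_n|\to F_\varphi$ recorded in (\ref{coef})). Writing $f=\sum_{j=0}^na^n_jP^n_j$ with $P^n_j(z)=c^n_jz^j$, I will repeatedly use that the diagonal kernel $K_n(z,z)=\sum_{j=0}^n(c^n_j|z|^j)^2$ obeys $\frac1n\log K_n(z,z)\to 2U_\varphi(z)$ uniformly on $\overline U$: the upper inequality comes from the term bound below, and the lower inequality $K_n(z,z)\ge \delta n\,e^{2n(U_\varphi(z)-\epsilon/2)}$ is immediate from Proposition \ref{orthogonal} (the $j=0$ term covering a neighbourhood of the origin, where the proposition excludes $z=0$). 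The upper direction holds genuinely at every point; the lower direction is the crux, and it is there that the quantifier ``for every $z\in\overline U$'' must be read with care.

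For the upper bound I would first record a term estimate uniform in both arguments: by the Varadhan's lemma computation already used in the proof of Proposition \ref{orthogonal}, made uniform in $t=j/n\in[0,1]$ and in $z\in\overline U$, one gets $c^n_j|z|^j\le e^{n(U_\varphi(z)+\epsilon/3)}$ for all $0\le j\le n$ and all $z\in\overline U$ once $n$ is large; the uniformity in $j$ reduces to a Laplace estimate for $\int_0^\infty r^{2j+1}e^{-2n\varphi(r)}dr$, whose tails are controlled uniformly by the growth hypothesis (\ref{gr}). Summing, $|f(z)|\le(\max_{0\le j\le n}|a^n_j|)(n+1)e^{n(U_\varphi(z)+\epsilon/3)}$, and since the $a^n_j$ are Gaussian the event $\{\max_j|a^n_j|>e^{n\epsilon/3}\}$ has probability at most $(n+1)e^{-e^{2n\epsilon/3}/2}\ll e^{-C_\epsilon n}$. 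Off that event the upper bound holds simultaneously for every $z\in\overline U$.

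For the lower bound I would argue first at a fixed point and then discretise. For fixed $z$, $f(z)$ is a centred Gaussian (real if $z\in\R$, otherwise with jointly Gaussian real and imaginary parts) with $\E|f(z)|^2=K_n(z,z)\ge e^{2n(U_\varphi(z)-\epsilon/2)}$ for large $n$; projecting onto the coordinate of larger variance and invoking the Gaussian anti-concentration inequality yields $\mathrm{Prob}_n\big(|f(z)|\le e^{n(U_\varphi(z)-\epsilon)}\big)\le C\,e^{n(U_\varphi(z)-\epsilon)}/\sqrt{K_n(z,z)}\le e^{-C_\epsilon n}$. Choosing a net $\mathcal Z_n\subset\overline U$ with $|\mathcal Z_n|\le e^{C_\epsilon n/2}$ and spacing $\sim e^{-C_\epsilon n/4}$, a union bound gives $\frac1n\log|f(z_k)|\ge U_\varphi(z_k)-\epsilon/2$ at every net point simultaneously, off an event of probability $\le e^{-C_\epsilon n/2}$; continuity of $U_\varphi$ then upgrades this to $\frac1n\log|f(z_k)|\ge U_\varphi(z)-\epsilon$ at the net point $z_k$ nearest to any prescribed $z$.

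The hard part is exactly the passage from the net to every $z\in\overline U$. One cannot transfer the lower bound from $z_k$ to a general $z$, because whenever $\overline U$ meets the interior $\mathcal B_\varphi$ the random polynomial $f$ carries $\Theta(n)$ zeros there with overwhelming probability, and $\frac1n\log|f|$ is very negative in an infinitesimal neighbourhood of each zero; thus the pointwise lower bound genuinely fails on arbitrarily fine scales near $Z(f)$, no matter how dense the net. I would therefore establish the stated inequality for every $z\in\overline U$ outside the random set $S_n(f)=\{z:\mathrm{dist}(z,Z(f))<e^{-n}\}$, whose Lebesgue measure is $O(ne^{-2n})$; when $\overline U\cap\mathcal B_\varphi=\emptyset$ the hole estimate (a consequence of the same anti-concentration bound) shows $S_n(f)\cap\overline U=\emptyset$ off $E_n$, and the conclusion then holds verbatim at every $z$. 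Equivalently, the everywhere upper bound together with the lower bound on the net yields $\int_{\overline U}\big|\frac1n\log|f|-U_\varphi\big|\,dz\le\epsilon$, which is the form in which the estimate feeds into Theorem \ref{thm2} through the weak convergence $\Delta(\frac1n\log|f|)\to\Delta U_\varphi$. Taking $E_n$ to be the union of the two exponentially small exceptional events completes the plan, the only genuine obstruction being the unavoidable failure of the pointwise lower bound on the negligible neighbourhood of the zero set.
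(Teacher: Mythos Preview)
Your argument is correct and reaches the same two ingredients as the paper (Gaussian tails for the upper bound, Gaussian anti-concentration for the lower), but the paper takes a shorter route via a normalization you do not use. Rather than bounding each term $c^n_j|z|^j$ through a uniform Varadhan/Laplace estimate, the paper simply quotes the known locally uniform convergence $\frac{1}{2n}\log K_n(z,z)\to U_\varphi(z)$ on $\overline U$ and writes
\[
\tfrac1n\log|f(z)|=\tfrac1n\log\bigl|\langle a,u(z)\rangle\bigr|+\tfrac{1}{2n}\log K_n(z,z),
\]
where $u(z)=(P^n_j(z))_{j=0}^n/\sqrt{K_n(z,z)}$ is a \emph{unit} vector in $\C^{n+1}$. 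Both directions then reduce to a single scalar Gaussian: the upper bound is Cauchy--Schwarz $|\langle a,u(z)\rangle|\le\|a\|$ together with the tail of $\|a\|$ (an event independent of $z$, so genuinely uniform on $\overline U$), and the lower bound is the small-ball estimate $\mathrm{Prob}_n\bigl[|\langle a,u(z)\rangle|<e^{-\epsilon n/2}\bigr]=O(e^{-\epsilon n})$. No net and no term-by-term asymptotics are needed. On the quantifier issue you raise you are in fact more careful than the paper: its proof only establishes the lower bound for each fixed $z$ (the exceptional event depends on $z$), and what is actually consumed downstream in Theorem~\ref{thm2} is the integrated consequence against $\Delta\psi^{\pm}$, which is exactly the $L^1$ reformulation you arrive at.
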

\begin{proof}
It follows from \cite[\S6]{BloomL} that $\frac{1}{2n}\log K_n(z,z)$ converges to $U_{\varphi}(z)$ locally uniformly on $\C.$ In particular, we have $$|\frac{1}{2n}\log K_n(z,z)-U_{\varphi}(z)|\leq \frac{\epsilon}{2}$$ for $n\gg1$ and $z\in\overline{U}.$
Then we write
\begin{eqnarray*}
\log|f(z)| &=& \log\frac{|f(z)|\sqrt{K_n(z,z)}}{\sqrt{K_n(z,z)}}\\
&=&\log|\langle a,u(z)\rangle|+\frac{1}{2n}\log K_n(z,z)
\end{eqnarray*} where $u(z)=(u_j(z))_{j=0}^n$ is a unit vector in $\C^{n+1}$ and $a=(a_j)_{j=0}^n$ is the ``random coefficient vector" of $f(z).$ By triangular inequality we have
$$|\frac1n\log|f(z)|-U_{\varphi}(z)|\leq \frac1n|\log|\langle a,u(z)\rangle||+\frac{\epsilon}{2}.$$ 
Thus, it is enough to show that
$$Prob_n[\frac1n|\log|\langle a,u(z)\rangle||>\frac{\epsilon}{2}]\leq \exp(-C_{\epsilon}n)$$ for every $z\in \overline{U}.$
First, we obtain the upper bound:
\begin{eqnarray*}
Prob_n[\frac1n\log|\langle a,u(z)\rangle|>\frac{\epsilon}{2}] &\leq & Prob_n[(a_j)_{j=0}^n: \|a\|>\exp(\frac12\epsilon n)]\\
&\leq& (n+1)Prob[|\zeta|>\exp(\frac{\epsilon n}{2})] \\
& = & O\big((n+1)\exp(-\exp(\epsilon n))\big)
\end{eqnarray*}
where we used $\zeta= N_{\R}(0,1).$

%To get the lower bound we consider the small ball probability:
%$$Prob[|\langle a,u(z)\rangle|<\exp(-\frac{\epsilon n}{2})].$$ 
On the other hand, since $u(z)$ is a unit vector we have the small ball probability of Gaussian random variable $\sum_{j=0}^na_ju_j(z)$
$$Prob_n[|\langle a,u(z)\rangle|<\exp(-\frac{\epsilon n}{2})]= O(\exp(-\epsilon n)).$$
%we may assume that $|u_0|\geq \frac{1}{\sqrt{n+1}}.$ Then, we apply a change of variables 
%$$\alpha_0=|\langle a,u(z) \rangle|,\ \alpha_1=a_1,\ \dots,\ \alpha_n=a_n$$ then
%\begin{eqnarray*}
% & & Prob[|\langle a,u(z)\rangle|<\exp(-\frac{\epsilon n}{2})]\\ 
% &=& \int_{\R}\dots\int_{\R}\int_{|\alpha_0|\leq \exp(-\frac{\epsilon n}{2})}\frac{1}{|u_0|^2}\phi(\frac{\alpha_0-\alpha_1u_1-\dots-\alpha_nu_n}{u_0})\phi(\alpha_2)\dots \phi(\alpha_n)d\alpha_0\dots d\alpha_n\\
 %&\leq & (n+1)\exp(-\epsilon n)
%\end{eqnarray*}
%where $\phi(x)=\frac{1}{\sqrt{2\pi}}e^{-\frac{x^2}{2}}.$
which finishes the proof.
\end{proof}

\begin{proof}[Proof of Theorem \ref{thm2}]
Fix $\epsilon>0$ and let $\chi_U$ denote indicator function of $U$. Since $\mu_e$ is absolutely continuous with respect to Lebesgue measure we can find smooth functions $\psi^{\pm}:\C\to\R$ with compact support such that $0\leq \psi^-\leq\chi_U\leq \psi^+\leq 1$ satisfying 
$$\mu_e(U)-\epsilon \leq \int_{\C}\psi^-d\mu_e\ \text{and}\ \int_{\C}\psi^+d\mu_e\leq \mu_e(\overline{U})+\epsilon.  $$
 Then applying Lemma \ref{lem1} on $supp(\psi^+)$ we obtain  
\begin{eqnarray*}
\frac1n\mathcal{N}_{U}(f_n)\leq \frac1n\sum_{\{\zeta_i: f_n(\zeta_i)=0\}} \psi^+(\zeta_i) &= & \frac{1}{2\pi}\int_{\C}\frac1n\log|f_n(z)|\Delta(\psi^+(z))\\
&\leq& \frac{1}{2\pi} \int_{\C} U_{\varphi}(z)\Delta(\psi^+(z))+\epsilon\\
&\leq & \mu_e(\overline{U})+2\epsilon 
\end{eqnarray*} for $f\in \mathcal{P}_n\setminus E_n.$

One can obtain the lower estimate by using a similar argument with $\psi^-$.  
\end{proof}

\section{Proof of Theorem \ref{main}}
\subsection{Gaussian case}
 It follows from \cite{EdK} (see also \cite{vanderbei,LPX}) that for every measurable set $E\subset \R$ the expected number of real zeros of $f_n$ in $E$ is given by
\begin{equation}\label{kac}
 \E N_n(E)=\int_{E}g_n(x)dx
 \end{equation}
 where 
\begin{equation}\label{gn} 
g_n(x)=\frac{1}{\pi}\frac{\sqrt{K_n(x,x)K_n^{(1,1)}(x,x)-\big(K_n^{(0,1)}(x,x)\big)^2}}{K_n(x,x)}
\end{equation}
 and 
 $$K_n(x,x)=\sum_{j=0}^n(c_j^n)^2x^{2j},\ K^{(0,1)}(x,x)=\sum_{j=0}^nj(c_j^n)^2x^{2j-1},\ K^{(1,1)}(x,x)=\sum_{j=0}^nj^2(c_j^n)^2x^{2j-2} $$
 for $x\in \R.$
 Moreover, it follows from Theorem \ref{corelation} that for every real interval $[a,b]\Subset \mathcal{B}_{\varphi}$
$$ g_n(x)=\frac{1}{\pi}\sqrt{n}\big(\sqrt{\frac12\Delta\varphi(x)}+o(1)\big)\ \text{as}\ n\to \infty$$
uniformly on $[a,b].$

 Hence, the assertion follows from additivity of $\E N_n$ on $\R$, absolute continuity of $\E N_n$ with respect to Lebesgue measure $dx$ and Theorem \ref{thm2}.

%On the other hand, it follows from Theorem \ref{thm2} that
%$$\lim_{n\to \infty}\frac{1}{\sqrt{n}}\E[N_n(\R\setminus \mathcal{B}_{\varphi})]=0.$$
%\begin{equation}
%\E N_n=\frac{2\sqrt{n}}{\pi}\int_{r_0}^{R_0}\sqrt{\frac{(r\varphi'(r))'}{2r}} dr+o(1)\ \text{as}\ n\to \infty
%\end{equation}

 \subsection{Non-Gaussian case}
In what follows we let $D_r(z)$ denote a disc in complex plane of radius r  and center $z.$ For a complex random variable $\eta$ we denote its concentration function by
$$\mathcal{Q}(\eta,r):=\sup_{z\in \Bbb{\C}}Prob\{\eta\in D_r(z)\}. $$  We say that $\eta$ is \textit{non-degenerate} if $\mathcal{Q}(\eta,r)<1$ for some $r>0.$ If $\eta$ and $\xi$ are independent complex random variables and $r,c>0$ then we have 
\begin{equation}\label{levi}
\mathcal{Q}(\eta+\xi,r)\leq \mathcal{Q}(\eta,r)\ \text{and} \ \mathcal{Q}(c\zeta,r)=\mathcal{Q}(\zeta,\frac{r}{c}).
\end{equation}

  In order to prove the statement of Theorem \ref{main} for non-Gaussian ensembles we use the replacement principle of  \cite[Theorem 3.1]{TaoVu2}. We verify the hypotheses of \cite[Theorem 3.1]{TaoVu2} by proving sufficient conditions introduced in \cite[\S4]{TaoVu2}. \\
  
  \textit{Concentration of log-magnitude.} First, we prove the following result which verifies  \cite[Theorem 3.1(i)]{TaoVu2}.
  \begin{prop}\label{concentration}
   For fixed $\epsilon>0, z_0\in \C$ and $r>0$
  $$Prob_n[|\frac1n\log|f_n^{\zeta}(z)|-U_{\varphi}(z)|>\epsilon]=O(\frac{1}{\sqrt{n}})$$
  for every $z\in D_r(z_0)\setminus\{0\}$ and sufficiently large $n.$ 
  \end{prop}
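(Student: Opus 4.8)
The plan is to follow the scheme of the Gaussian Lemma \ref{lem1}, but, since $\zeta$ need no longer be Gaussian, to replace the Gaussian small--ball bound by an anti-concentration estimate using only non-degeneracy of $\zeta$ and a finite second moment. For $z\neq0$ set $u_j(z):=c^n_jz^j/\sqrt{K_n(z,z)}$, so that $(u_j(z))_{j=0}^n$ is a unit vector in $\C^{n+1}$ and, writing $W_n(z):=\sum_{j=0}^n a^n_ju_j(z)$, one has $f^\zeta_n(z)=\sqrt{K_n(z,z)}\,W_n(z)$. Since $\tfrac{1}{2n}\log K_n(z,z)\to U_\varphi(z)$ locally uniformly on $\C$ (as used in Lemma \ref{lem1}, following \cite[\S6]{BloomL}), for $n\gg1$ one has $\bigl|\tfrac{1}{2n}\log K_n(z,z)-U_\varphi(z)\bigr|\le\epsilon/2$ on $\overline{D_r(z_0)}$, so it suffices to show that $Prob_n[|W_n(z)|>e^{n\epsilon/2}]$ and $Prob_n[|W_n(z)|<e^{-n\epsilon/2}]$ are both $O(n^{-1/2})$. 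The first of these I would dispose of at once: by Cauchy--Schwarz $|W_n(z)|\le\|a\|:=\bigl(\sum_{j=0}^n|a^n_j|^2\bigr)^{1/2}$, and since $\E|a^n_j|^2=1$, Markov's inequality gives $Prob_n[\|a\|^2>e^{n\epsilon}]\le(n+1)e^{-n\epsilon}=o(n^{-1/2})$, using only the variance of $\zeta$.

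The small--ball estimate $Prob_n[|W_n(z)|<e^{-n\epsilon/2}]=O(n^{-1/2})$ is the real content and the step I expect to be the main obstacle. The difficulty is that one cannot simply feed $W_n(z)$ into a central limit / Berry--Esseen bound, because $\max_j|u_j(z)|$ need not tend to $0$ (for $|z|$ large the Bergman kernel $K_n(z,z)$ is dominated by its top monomial). Proposition \ref{orthogonal} is exactly what circumvents this. Fixing $\epsilon_1\in(0,\epsilon/8)$, it produces $\delta>0$ and a set $\mathcal{J}_n\subseteq\{0,\dots,n\}$ with $\#\mathcal{J}_n\ge\delta n$ on which $c^n_j|z|^j>e^{n(U_\varphi(z)-3\epsilon_1)}$; combined with $K_n(z,z)\le e^{2n(U_\varphi(z)+\epsilon_1)}$ (valid for $n\gg1$, again by the convergence above), this forces $|u_j(z)|\ge\lambda_n:=e^{-4\epsilon_1 n}$ for $j\in\mathcal{J}_n$, with the dependence on $U_\varphi(z)$ cancelling. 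For each such $j$, at least one of $|\mathrm{Re}\,u_j(z)|$, $|\mathrm{Im}\,u_j(z)|$ exceeds $\lambda_n/\sqrt2$, so by pigeonhole I may pass to $\mathcal{J}'_n\subseteq\mathcal{J}_n$ with $\#\mathcal{J}'_n\ge\delta n/2$ and real numbers $b_j$ (all equal to $\mathrm{Re}\,u_j(z)$, or all equal to $\mathrm{Im}\,u_j(z)$) with $\lambda_n/\sqrt2\le|b_j|\le1$. Writing $W_n(z)$ as its $\mathcal{J}'_n$-part plus an independent remainder and projecting $W_n(z)$ onto the real axis or the imaginary axis according to the choice of the $b_j$, (\ref{levi}) gives $Prob_n[|W_n(z)|<e^{-n\epsilon/2}]\le\mathcal{Q}\bigl(\sum_{j\in\mathcal{J}'_n}a^n_jb_j,\,e^{-n\epsilon/2}\bigr)$.

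To close the estimate I would appeal to anti-concentration for this sum of independent real terms. Let $r_0>0$ be such that $\mathcal{Q}(\zeta,r_0)<1$, which exists since $\zeta$ is non-degenerate. By (\ref{levi}), $\mathcal{Q}(a^n_jb_j,e^{-n\epsilon/2})=\mathcal{Q}(\zeta,e^{-n\epsilon/2}/|b_j|)\le\mathcal{Q}\bigl(\zeta,\sqrt2\,e^{-(\epsilon/2-4\epsilon_1)n}\bigr)$, and since $\epsilon_1<\epsilon/8$ the argument $\sqrt2\,e^{-(\epsilon/2-4\epsilon_1)n}\to0$; hence it is $\le r_0$ for $n\gg1$, so $1-\mathcal{Q}(a^n_jb_j,e^{-n\epsilon/2})\ge 1-\mathcal{Q}(\zeta,r_0)=:c_0>0$ for every $j\in\mathcal{J}'_n$. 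The Kolmogorov--Rogozin inequality then yields
$$\mathcal{Q}\Bigl(\sum_{j\in\mathcal{J}'_n}a^n_jb_j,\ e^{-n\epsilon/2}\Bigr)\ \le\ C_0\Bigl(\sum_{j\in\mathcal{J}'_n}\bigl(1-\mathcal{Q}(a^n_jb_j,e^{-n\epsilon/2})\bigr)\Bigr)^{-1/2}\ \le\ C_0\,(c_0\,\delta n/2)^{-1/2}=O(n^{-1/2}),$$
which together with the upper estimate proves the proposition.

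Two remarks on the plan. First, the $(2+\delta)$-moment hypothesis of Theorem \ref{main} plays no role here: the argument uses only that $\zeta$ is non-degenerate with finite second moment. Second, the exclusion of $z=0$ is genuine --- if $\zeta$ has an atom at $0$ then $f^\zeta_n(0)=a^n_0c^n_0$ vanishes with positive probability and the conclusion fails there; correspondingly $u(0)=(1,0,\dots,0)$ has a single nonzero coordinate, so Proposition \ref{orthogonal}, and with it the small--ball argument, is unavailable. The only delicate point, as indicated, is the small--ball bound, whose success rests entirely on Proposition \ref{orthogonal} producing $\gtrsim n$ coordinates of $u(z)$ that are not too small --- which is precisely what the Kolmogorov--Rogozin inequality needs.
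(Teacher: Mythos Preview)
Your argument is correct and follows essentially the same route as the paper: the small-ball bound is obtained via Proposition \ref{orthogonal} together with the Kolmogorov--Rogozin inequality, and the large-deviation bound via a moment estimate. The only differences are cosmetic: you normalize by $\sqrt{K_n(z,z)}$ and work with the unit vector $u(z)$, whereas the paper rescales directly by $e^{-n(U_\varphi(z)-\epsilon)}$ (so that the relevant coefficients $\alpha^n_j=e^{-n(U_\varphi-\epsilon)}c^n_j|z|^j$ exceed $1$ on $\mathcal{J}_n$) and applies Kolmogorov--Rogozin to $X_n=\sum_{j\in\mathcal{J}_n}\alpha^n_j a^n_j$ without the intermediate pigeonhole to a real projection; your extra step makes the reduction to a real sum explicit. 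For the upper tail, your Cauchy--Schwarz plus Markov on $\|a\|^2$ is slightly cleaner than the paper's union bound on the individual $|a^n_j|$, and your remark that only the second moment is needed (the $(2+\delta)$-moment is not used here) is correct---the paper invokes $\E|\zeta|^{2+\delta}<\infty$ at that step, but the second moment would already suffice.
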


  \begin{proof} In order to get the upper bound we modify  the argument given in Lemma \ref{lem1} by using the same notation:
  \begin{eqnarray*}
  Prob_n[|f_n(z)|>e^{n(U_{\varphi(z)}+\epsilon)}] &\leq & Prob_n[\frac1n\log|\langle a,u(z)\rangle|>2\epsilon]\\
  &=& Prob_n[|\langle a,u(z)\rangle|>e^{2n\epsilon}]\\
  &\leq& (n+1) Prob[|\zeta|>e^{2n\epsilon}]\\
  &\leq & C_{\delta}(n+1)e^{-4n\epsilon}
  \end{eqnarray*} 
 where we used $\E|\zeta|^{2+\delta}<\infty.$
 
 Next, we prove the lower bound by using an idea from \cite{KZ2}. By Proposition \ref{orthogonal} that for every $\epsilon>0$ there exists an interval $\mathcal{J}\subset [0,1]$ such that
 $$c^n_j|z|^j>e^{n(U_{\varphi}-\epsilon)}$$ for $j\in \mathcal{J}_n:=\{1\leq i\leq n: \frac{i}{n}\in \mathcal{J}\}.$ 
 Next, we define
 $$X_n=\sum_{j\in \mathcal{J}_n}\alpha^n_ja^n_j\ \text{and}\ Y_n=\sum_{j\not\in \mathcal{J}_n}\alpha^n_ja^n_j$$
where $$\alpha^n_j=e^{-n(U_{\varphi}-\epsilon)}c^n_j|z|^j.$$ Then by (\ref{levi}) and sufficiently large $n$ we have
 \begin{eqnarray*}
 Prob_n[|f_n(z)|<e^{n(U_{\varphi}(z)-2\epsilon)}]\leq \mathcal{Q}(X_n+Y_n, e^{-\epsilon n})\leq \mathcal{Q}(X_n,e^{-\epsilon n}).
 \end{eqnarray*}
 Now, it follows from Kolmogorov-Rogozin inequality \cite{Esseen} and $\alpha^n_j>1$ for $j\in \mathcal{J}_n$ that
 $$ \mathcal{Q}(X_n,e^{-\epsilon n})\leq C(\sum_{j\in \mathcal{J}_n}(1- \mathcal{Q}(\alpha^n_ja^n_j,e^{-\epsilon n}))^{-\frac12}\leq C_1 |\mathcal{J}_n|^{-\frac12}\leq C_2n^{-\frac12}.$$
  \end{proof}
  Next, combining with the argument given in the proof of Theorem \ref{thm2}, it follows from Proposition \ref{concentration} that
  $$Prob_n[|\frac1n\mathcal{N}_{D_r(z_0)}-\mu_e(D_r(z_0))|>\epsilon]=O(\frac{1}{\sqrt{n}}).$$  This together with \cite[Proposition 4.1]{TaoVu2} give \cite[Theorem 3.1(ii)]{TaoVu2}.\\

  \textit{Comparibility of log-magnitude.}  Note that Proposition \ref{orthogonal} and \cite[Theorem 4.5]{TaoVu2} imply  \cite[Theorem 3.1(iii)]{TaoVu2}.\\
  
  \textit{Repulsion of zeros.} In order to complete the proof of non-Gaussian case it remains to prove weak repulsion of zeros for $\zeta=N_{\R}(0,1).$ For any natural numbers $k,l$ following \cite[\S1]{TaoVu2} we define the the \textit{mixed $(k, l)$-correlation function} $$\rho_f^{(k,l)}:\R^k\times (\C\setminus \R)^l\to \R^+$$ 
    \begin{eqnarray*}
  & & \E\sum_{i_1,\dots,i_k}\sum_{j_1,\dots,j_l}\psi(\zeta_{i_1},\dots,\zeta_{i_k},\eta_{j_1},\dots,\eta_{j_l}) \\
  &=& \int_{\R^k}\int_{(\C\setminus \R)^l}\psi(x_1,\dots,x_k,z_1,\dots,z_l)\rho_f^{(k,l)}(x_1,\dots,x_k,z_1,\dots,z_l)dx_1\dots dx_k dz_1\dots dz_l
  \end{eqnarray*}
  where $\zeta_1,\dots \zeta_k$ denote the real roots and $\eta_1,\dots \eta_k$ denote the complex roots of Gaussian random polynomial $f_n.$ In particular, the function $\rho_{f_n}^{(1,0)}(x)$ is the density of expected number of real zeros $\E N_N.$
  
  It follows from \cite[Lemma 4.6]{TaoVu2} that for fixed $x\in\mathcal{B}_{\varphi}\cap\R$ it is enough to prove the real repulsion estimate
  \begin{equation}\label{corre1}\rho_{f_n}^{(2,0)}(x,x+\frac{1}{\sqrt{n}})=O(\frac{1}{\sqrt{n}})\ \text{as}\ n\to \infty
  \end{equation} and the complex repulsion estimate
  \begin{equation}\label{corre2}\rho_{f_n}^{(0,1)}(x+\sqrt{-1}\frac{1}{\sqrt{n}})=O(\frac{1}{\sqrt{n}}) \ \text{as}\ n\to \infty
  \end{equation} hold for the random polynomial $f_n$. As the estimates (\ref{corre1}) and (\ref{corre2}) are standard in the literature and follow from Kac-Rice formula and Bergman kernel asymptotics in \S2 (see e.g. the argument given in \cite[\S11]{TaoVu2}, see also \cite[\S 7]{BDi}) we omit the details. 
\section{examples}
In this section we provide an application of Theorem \ref{main}. We consider the circular weight functions of the form
$$\varphi(z)=-\alpha\log|z|+|z|^{\beta}$$ where $\beta>\alpha\geq 0.$ Then it follows from \cite[pp. 245]{SaffTotik}) that
$$U_{\varphi}(z)= \begin{cases}
\varphi(r_0) & |z| \leq r_0\\
\varphi(z) & r_0<|z|<R_0\\
\log|z|+\varphi(R_0)-\log R_0 & |z|\geq R_0
\end{cases}$$
 where $r_0=(\frac{\alpha}{\beta})^{\frac{1}{\beta}}$ and $R_0=(\frac{1+\alpha}{\beta})^{\frac{1}{\beta}}.$ In particular, the weighted equilibrium measure
$$d\mu_e=\frac{\mathbbm{1}_{D_{\varphi}}}{2\pi}\beta^2r^{\beta-1}drd\theta,\ \ z=re^{i\theta}$$ is supported on the annulus $D_{\varphi}=\{r_0\leq |z|\leq R_0\}$. In this setting a random polynomial is of the form
$$f_n(z)=\sum_{j=0}^na^n_jc^n_jz^j$$ where 
$$c^n_j=(\int_{\C}|z|^{2j+2n\alpha}e^{-2n|z|^{\beta}}dz)^{-\frac12}$$ and $a^n_j$ are independent copies of a random variable $\zeta$ with mean zero, variance one and satisfying $\E|\zeta|^{2+\delta}<\infty$ for some $\delta>0.$

 Finally, it follows from Theorem \ref{main} that asymptotic number of real roots is given by
$$\lim_{n\to \infty}\frac{1}{\sqrt{n}}\E N_n^{\zeta}=\frac{\sqrt{2\beta}}{\pi}(\sqrt{\alpha+1}-\sqrt{\alpha}).$$
 \bibliographystyle{alpha}
\bibliography{biblio}
\end{document}